\newtheorem{theorem}{\color{black}\indent Theorem}[section]
\newtheorem{lemma}{\color{black}\indent Lemma}[section]
\newtheorem{definition}{\color{black}\indent Definition}[section]
\newtheorem{remark}{\color{black}\indent Remark}[section]
\begin{document}
\title{\LARGE\bf Blow-up phenomena for a reaction diffusion equation with special diffusion process}
\author{Yuzhu Han}
 \date{}
 \maketitle

\footnotetext{\hspace{-1.9mm}$^\dag$Corresponding author.\\
Email addresses: yzhan@jlu.edu.cn(Y. Han).

\thanks{
$^*$Supported by NSFC (11401252) and by The Education Department of Jilin Province (JJKH20190018KJ).}}
\begin{center}
{\noindent\it\small School of Mathematics, Jilin University, Changchun 130012, P.R. China}
\end{center}

\date{}
\maketitle

{\bf Abstract}\ This paper is concerned with the blow-up property of solutions to an initial boundary value
problem for a reaction diffusion equation with special diffusion processes.
It is shown, under certain conditions on the initial data, that the solutions to this problem blow up in finite time,
by combining Hardy inequality, ``moving" potential well methods with some differential inequalities.
Moreover, the upper and lower bounds for the blow-up time are also derived when blow-up occurs.

{\bf Keywords} blow-up; blow-up time; reaction diffusion equation; special diffusion processes.

{\bf AMS Mathematics Subject Classification 2010:}  35B44, 35K05, 35K20.

\section{Introduction}
\setcounter{equation}{0}

It is well known, by the conservation law, that many diffusion processes with reaction can be described by the following
equation (see \cite{Pao})
\begin{equation}\label{rd}
u_t-\nabla\cdot(D\nabla u)=f(x,t,u,\nabla u),
\end{equation}
where $u(x,t)$ stands for the mass concentration in chemical reaction processes or temperature in heat conduction,
at position $x$ in the diffusion medium and time $t$,
the function $D$ is called the diffusion coefficient or the thermal diffusivity,
the term $\nabla\cdot(D\nabla u)$ represents the rate of change due to diffusion and
$f(x,t,u,\nabla u)$ is the rate of change due to reaction.

In this paper, we consider the finite time blow-up properties of solutions to an
initial boundary value problem of \eqref{rd} with the special diffusion coefficient $D=|x|^2$,
i.e., to the following problem
\begin{equation}\label{p}
\begin{cases}
\dfrac{u_t}{|x|^2}-\Delta u=k(t)u^p, &(x,t)\in \Omega\times(0,T),\\
u(x,t)=0, &(x,t)\in \partial \Omega\times(0,T),\\
u(x,0)=u_0(x), & x\in\Omega,
\end{cases}
\end{equation}
where $\Omega$ is a bounded domain in $\mathbb{R}^n (n\geq3)$ containing the origin $0$ with smooth boundary $\partial \Omega$,
$1<p<\frac{n+2}{n-2}$, and the initial datum $u_0\in H_0^1(\Omega)$ is nonnegative and nontrivial. Moreover,
for $x=(x_1,x_2,\cdots,x_n)\in \mathbb{R}^n$, $|x|=\sqrt{x_1^2+x_2^2+\cdots+x_n^2}$.
The weight function $k(t)$ satisfies
$${\rm (A)}\qquad\qquad k(t)\in C^1[0,+\infty),\ k(0)>0,\ k'(t)\geq0,\qquad\qquad\qquad \forall\ t\in[0,+\infty).$$

In many practical situations one would like to know whether the solutions to some evolution problems blow up, and if so,
at which time $T$ blow-up occurs. Since $T$ can not be determined explicitly in most cases,
it is an important issue to establish the lower and (or) upper bounds for $T$.
There is an abounding literature on blow-up properties of solutions to nonlinear
evolution equations and systems, of which we only refer the reader to the monograph of Hu \cite{Hu},
and to the survey papers of Levine \cite{Levine} and of Galaktionov and V\'{a}zquez \cite{Gala}.
Clearly this list of references is far from complete and could be enlarged by the numerous papers cited in \cite{Gala,Hu,Levine}.

For the case $D\equiv1$ and $f(x,t,u,\nabla u)=u^p\ (p>1)$,
the blow-up phenomena for equation \eqref{rd} in both bounded domains and the whole space
have been studied extensively, which can also be found in the above mentioned references.
However, much less effort has been devoted to the blow-up properties of solutions to problem \eqref{p}.
When $k(t)\equiv1$, Tan \cite{Tan1} considered the existence and
asymptotic estimates of global solutions and finite time blow-up of local solutions to problem \eqref{p}.
By using the potential well method proposed by Sattinger and Payne \cite{Payne1975,Sattinger1968} and Hardy inequality,
he gave some sufficient conditions for the solutions to exist globally or to blow up in finite time,
when the initial energy is subcritical, i.e., initial energy smaller than the mountain pass level.
These results were later extended by Tan to $p$-Laplace equation with subcritical initial energy \cite{Tan2},
by Han to $p$-Laplace equation with supercritical initial energy \cite{Han2} and
by Zhou to porous medium equation and polytropic filtration equation \cite{Zhou1,Zhou2}.

Motivated by the works mentioned above, we shall consider the blow-up phenomena for problem \eqref{p}
and investigate what role the weight function $k(t)$ plays in determining the blow-up condition
and blow-up time of solutions to problem \eqref{p}.
To be a little more precise, we shall show, under the assumption (A) on $k(t)$, that the solutions to
problem \eqref{p} blow up in finite time if one of the following three assumptions holds:

(i) the initial energy is negative, i.e., $J(u_0;0)<0$;

(ii) the initial Nehari energy is negative and the initial energy is smaller than or equal to the potential well depth at infinity,
i.e., $I(u_0;0)<0$ and $J(u_0;0)\leq d(\infty)$;

(iii) $0<J(u_0;0)<C\|\frac{u_0}{|x|}\|_2^2$ for some positive constant $C$.\\
Moreover, the upper and lower bounds for the blow-up time are also derived,
with the help of Gagliardo-Nirenberg's inequality.
The main difficulties are of course caused by the weight function $k(t)$ and the singular coefficient $|x|^{-2}$.
Since $k(t)$ may not be a constant,
we have to consider the ``moving" potential wells, i.e., potential wells vary with time $t$, when proving case (ii).
To overcome the difficulty caused by $|x|^{-2}$, we apply Hardy inequality when dealing with case (iii).

The rest of this paper is organized as follows. In Section 2,
we shall introduce some definitions and auxiliary lemmas as preliminaries.
In Section 3, we give three sufficient conditions for the solutions to problem \eqref{p} to blow up in finite time,
and derive the upper bounds for blow-up time for each case. The lower bound for blow-up time will be derived in Section 4.

\section{Preliminaries}
\setcounter{equation}{0}

Throughout this paper, we denote by $\|\cdot\|_r$ the norm on $L^r(\Omega) (1\leq r\leq\infty)$,
and by $(\cdot ,\cdot )$ the inner product in $L^2(\Omega)$.
By $H_0^1(\Omega)$ we denote the Sobolev space such that both $u$ and $|\nabla u|$ belong to $L^2(\Omega)$ for any $u\in H_0^1(\Omega)$,
which will be endowed with the equivalent norm $\|u\|_{H_0^1(\Omega)}=\|\nabla u\|_2$.

We first recall a classical result essentially due to Hardy (see \cite{Hardy}).

\begin{lemma}\label{Hardy}
Assume that $u\in H^{1}(\mathbb{R}^n)$, $n\geq3$. Then $\dfrac{u}{|x|}\in L^2(\mathbb{R}^n)$, and
\begin{equation}\label{HI}
\int_{\mathbb{R}^n}\dfrac{|u|^2}{|x|^2}{\rm d}x\leq H_{n}\int_{\mathbb{R}^n}|\nabla u|^2{\rm d}x,
\end{equation}
where $H_{n}=4/(n-2)^2$.
\end{lemma}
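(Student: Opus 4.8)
The plan is to first prove the inequality for test functions $u\in C_c^\infty(\mathbb{R}^n)$ and then pass to general $u\in H^1(\mathbb{R}^n)$ by density. The driving observation is the elementary identity
\begin{equation*}
\nabla\cdot\frac{x}{|x|^2}=\frac{n-2}{|x|^2}\qquad\text{on }\mathbb{R}^n\setminus\{0\},
\end{equation*}
which follows from $\partial_i(x_i/|x|^2)=|x|^{-2}-2x_i^2|x|^{-4}$ and summing over $i$. This lets me rewrite the weighted integral as $(n-2)\int_{\mathbb{R}^n}\frac{|u|^2}{|x|^2}\,{\rm d}x=\int_{\mathbb{R}^n}|u|^2\,\nabla\cdot\frac{x}{|x|^2}\,{\rm d}x$ and then integrate by parts to move the derivative onto $u$.

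Because the vector field $x/|x|^2$ is singular at the origin, I would not integrate by parts over all of $\mathbb{R}^n$ directly; instead I would work on the exterior region $\{|x|>\varepsilon\}$ and let $\varepsilon\to0$. The divergence theorem there produces a spherical boundary term on $\{|x|=\varepsilon\}$ of the form $-\varepsilon^{-1}\int_{|x|=\varepsilon}|u|^2\,{\rm d}S$, whose absolute value is bounded by $\|u\|_\infty^2\,\omega_{n-1}\,\varepsilon^{n-2}$; this is exactly where the hypothesis $n\geq3$ is used, since it forces the boundary contribution to vanish as $\varepsilon\to0$. Controlling this boundary term is the main (and only real) obstacle in the argument; everything else is routine. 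Passing to the limit then yields
\begin{equation*}
(n-2)\int_{\mathbb{R}^n}\frac{|u|^2}{|x|^2}\,{\rm d}x=-2\int_{\mathbb{R}^n}u\,\nabla u\cdot\frac{x}{|x|^2}\,{\rm d}x.
\end{equation*}

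Writing $A=\int_{\mathbb{R}^n}\frac{|u|^2}{|x|^2}\,{\rm d}x$ and $B=\int_{\mathbb{R}^n}|\nabla u|^2\,{\rm d}x$, I would estimate the right-hand side using $\bigl|\nabla u\cdot\frac{x}{|x|^2}\bigr|\leq\frac{|\nabla u|}{|x|}$ followed by the Cauchy--Schwarz inequality, obtaining $(n-2)A\leq 2\sqrt{A}\,\sqrt{B}$. Dividing by $\sqrt{A}$ (the case $A=0$ being trivial) gives $\sqrt{A}\leq\frac{2}{n-2}\sqrt{B}$, hence $A\leq\frac{4}{(n-2)^2}B$, which is precisely \eqref{HI} with $H_n=4/(n-2)^2$. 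Finally, for arbitrary $u\in H^1(\mathbb{R}^n)$ I would take $u_j\in C_c^\infty(\mathbb{R}^n)$ with $u_j\to u$ in $H^1$, pass to a subsequence converging almost everywhere, and apply Fatou's lemma to the left-hand side while using $\|\nabla u_j\|_2\to\|\nabla u\|_2$ on the right; this simultaneously shows $u/|x|\in L^2(\mathbb{R}^n)$ and preserves the inequality in the limit.
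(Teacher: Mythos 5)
Your proof is correct, but note that the paper itself does not prove this lemma at all: it is stated as a classical result and simply attributed to Hardy--Littlewood--Polya \cite{Hardy}, so there is no internal argument to compare against. What you supply is the standard self-contained derivation: the divergence identity $\nabla\cdot\bigl(x/|x|^2\bigr)=(n-2)/|x|^2$, integration by parts on $\{|x|>\varepsilon\}$ with the boundary term on $\{|x|=\varepsilon\}$ of size $O(\varepsilon^{n-2})$ (correctly identifying $n\geq3$ as what kills it), the Cauchy--Schwarz step giving $(n-2)A\leq 2\sqrt{A}\sqrt{B}$ and hence the sharp constant $H_n=4/(n-2)^2$, and finally density of $C_c^\infty(\mathbb{R}^n)$ in $H^1(\mathbb{R}^n)$ combined with Fatou's lemma to reach general $u$, which simultaneously yields $u/|x|\in L^2(\mathbb{R}^n)$. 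All steps check out: the boundary term has the sign and magnitude you claim, the omitted case $A=0$ is indeed trivial, and the dominated-convergence passage $\varepsilon\to0$ on the bulk term is routine since $2|u|\,|\nabla u|/|x|$ is integrable for compactly supported smooth $u$. The only thing your write-up buys beyond the paper is self-containedness and an explicit view of where the dimension restriction and the constant come from; conversely, the paper's citation-only treatment is standard practice for such a classical inequality and is not a defect.
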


\begin{remark}
For any $u\in H_0^{1}(\Omega)$, extend $u(x)$ to be $0$ for $x\in \mathbb{R}^n\setminus\Omega$.
Then $u\in H^{1}(\mathbb{R}^n)$ and therefore \eqref{HI} also holds for  $u\in H_0^{1}(\Omega)$.
\end{remark}

For any $u\in H_0^1(\Omega)$ and $t\geq0$, define the time-dependent energy functional and Nehari functional, respectively, by
\begin{equation}\label{j}
J(u;t)=\frac{1}{2}\|\nabla u\|_2^2-\frac{k(t)}{p+1}\|u\|^{p+1}_{p+1},
\end{equation}
and
\begin{equation}\label{i}
I(u;t)=\|\nabla u\|_2^2-k(t)\|u\|^{p+1}_{p+1}.
\end{equation}
Since $p+1<\frac{2n}{n-2}$, both $J(\cdot\ ;t)$ and $I(\cdot\ ;t)$ are well defined and continuous in $H_0^1(\Omega)$ for any $t\geq0$.
We also define, for any $t\geq0$, the ``moving" Nahari's manifold by
$$\mathcal{N}(t)=\{v\in H_0^1(\Omega)\setminus\{0\}:I(v;t)=0\}.$$
It is not hard to verify that $\mathcal{N}(t)$ is non-empty and the potential well depth
$$d(t)=\inf_{v\in H^{1}_{0}(\Omega)\atop v\neq0}\sup_{\lambda\geq0}J(\lambda v;t)=\inf_{v\in N(t)}J(v;t)$$
is positive for any $t\geq0$.

\begin{lemma}\label{depth}
Suppose that (A) holds. Then for any $t\in[0,\infty)$,

{\rm (1)}
\begin{equation}\label{d}
d(t)=\dfrac{p-1}{2(p+1)}\left (k(t)\right)^{\frac{2}{1-p}}S_{p}^{\frac{2(p+1)}{p-1}}>0,
\end{equation}
where
\begin{equation}\label{sp}
S_{p}=\inf_{v\in H^{1}_{0}(\Omega)\atop v\neq0}\frac{\|\nabla v\|_{2}}{\|v\|_{p+1}}.
\end{equation}

{\rm (2)} $d(t)$ is non-increasing and $d(\infty)\in [0,d(0)]$, where $d(\infty):=\underset{t\rightarrow\infty}{\lim}d(t)$.
\end{lemma}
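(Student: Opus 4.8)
The plan is to prove part (1) by explicitly maximizing the fibering map $\lambda\mapsto J(\lambda v;t)$ along each fixed direction $v$, and then minimizing over $v$. First I would fix $t\geq 0$ and $v\in H_0^1(\Omega)\setminus\{0\}$ and write
\begin{equation*}
J(\lambda v;t)=\frac{\lambda^2}{2}\|\nabla v\|_2^2-\frac{k(t)\lambda^{p+1}}{p+1}\|v\|_{p+1}^{p+1},\qquad \lambda\geq 0.
\end{equation*}
Differentiating in $\lambda$ and using $p>1$ together with $k(t)>0$, this function vanishes at $\lambda=0$, increases, and then decreases to $-\infty$, so it has a unique positive maximizer $\lambda_*=\lambda_*(v,t)$ determined by $\|\nabla v\|_2^2=k(t)\lambda_*^{p-1}\|v\|_{p+1}^{p+1}$. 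The key observation is that this stationarity condition is exactly $I(\lambda_* v;t)=0$, so $\lambda_* v\in\mathcal{N}(t)$; this is what ties the $\inf\sup$ definition of $d(t)$ to the minimization over $\mathcal{N}(t)$.

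Next I would evaluate the maximum value. On $\mathcal{N}(t)$ one has $k(t)\|w\|_{p+1}^{p+1}=\|\nabla w\|_2^2$, whence
\begin{equation*}
J(w;t)=J(w;t)-\frac{1}{p+1}I(w;t)=\frac{p-1}{2(p+1)}\|\nabla w\|_2^2.
\end{equation*}
Applying this with $w=\lambda_* v$ and substituting $\lambda_*$ from the stationarity relation, the exponents collapse (using $\frac{2}{p-1}+1=\frac{p+1}{p-1}$) to give
\begin{equation*}
\sup_{\lambda\geq 0}J(\lambda v;t)=\frac{p-1}{2(p+1)}\left(k(t)\right)^{\frac{2}{1-p}}\left(\frac{\|\nabla v\|_2}{\|v\|_{p+1}}\right)^{\frac{2(p+1)}{p-1}}.
\end{equation*}
Taking the infimum over $v\neq 0$ and noting that the exponent $\frac{2(p+1)}{p-1}$ is positive, so the infimum may be pulled inside the power, the ratio yields exactly $S_p^{2(p+1)/(p-1)}$, which is \eqref{d}. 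Positivity of $d(t)$ then rests on two facts: $S_p>0$, a restatement of the continuous Sobolev embedding $H_0^1(\Omega)\hookrightarrow L^{p+1}(\Omega)$, valid because $p+1<\frac{2n}{n-2}$; and $k(t)\geq k(0)>0$, guaranteed by assumption (A).

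For part (2), monotonicity is immediate from \eqref{d}. Writing $d(t)=C_p\left(k(t)\right)^{2/(1-p)}$ with $C_p=\frac{p-1}{2(p+1)}S_p^{2(p+1)/(p-1)}>0$, the exponent $\frac{2}{1-p}$ is negative since $p>1$, so $s\mapsto s^{2/(1-p)}$ is strictly decreasing on $(0,\infty)$; because $k'(t)\geq 0$ makes $k$ non-decreasing, the composition $d(t)$ is non-increasing. Being non-increasing and bounded below by $0$, the limit $d(\infty)=\lim_{t\to\infty}d(t)$ exists and satisfies $0\leq d(\infty)\leq d(0)$, which is the assertion $d(\infty)\in[0,d(0)]$. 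The only genuinely delicate point is the bookkeeping of exponents in the substitution step and the justification for interchanging the infimum with the positive power $\frac{2(p+1)}{p-1}$; the latter is legitimate precisely because $x\mapsto x^{2(p+1)/(p-1)}$ is increasing on $[0,\infty)$, so it commutes with the infimum. Everything else follows directly from the explicit formula and assumption (A).
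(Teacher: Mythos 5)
Your proposal is correct and follows essentially the same route as the paper: fix $v$ and $t$, maximize the fibering map $\lambda\mapsto J(\lambda v;t)$ at the unique critical point $\lambda_*=\bigl(\|\nabla v\|_2^2/(k(t)\|v\|_{p+1}^{p+1})\bigr)^{1/(p-1)}$, and take the infimum over $v$, pulling it inside the increasing power to produce $S_p^{2(p+1)/(p-1)}$. Your use of the Nehari identity $J(w;t)=\frac{p-1}{2(p+1)}\|\nabla w\|_2^2$ on $\mathcal{N}(t)$ to evaluate the maximum is a minor computational shortcut relative to the paper's direct substitution of $\lambda_0$ into $F$, and your part (2) simply spells out the monotonicity argument the paper leaves implicit.
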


\begin{proof}
(1) Fix $0\neq v\in H_0^1(\Omega)$ and $t\geq0$. Set
\begin{equation*}
F(\lambda):=J(\lambda v;t)=\frac{\lambda^{2}}{2}\|\nabla v\|^{2}_{2}-\frac{k(t)}{p+1}\lambda^{p+1}\|v\|^{p+1}_{p+1},\qquad\lambda\geq0.
\end{equation*}
Then it is easy to see that $F(\lambda)$ has only one critical point $\lambda_0=\left(\frac{\|\nabla v\|^{2}_{2}}{k(t)\|v\|^{p+1}_{p+1}}\right)^{\frac{1}{p-1}}$,
$F(\lambda)$ is increasing on $(0,\lambda_0)$, decreasing on $(\lambda_0,\infty)$ and takes its maximum at $\lambda=\lambda_0$.
Therefore,
\begin{align*}
d(t)&=\inf_{v\in H^{1}_{0}(\Omega)\atop v\neq0}\sup_{\lambda\geq0}J(\lambda v;t)=\inf_{v\in H^{1}_{0}(\Omega)\atop v\neq0}F(\lambda_0)\\
&=\inf_{v\in H^{1}_{0}(\Omega)\atop v\neq0}\left\{\frac{\lambda_{0}^{2}}{2}\|\nabla v\|^{2}_{2}-\frac{k(t)}{p+1}\lambda_{0}^{p+1}\|v\|^{p+1}_{p+1}\right\}\\
&=\frac{p-1}{2(p+1)}(k(t))^{\frac{2}{1-p}}\left(\inf_{v\in H^{1}_{0}(\Omega)\atop v\neq0}\frac{\|\nabla v\|_{2}}{\|v\|_{p+1}}\right)^{\frac{2(p+1)}{p-1}}\\
&=\frac{p-1}{2(p+1)}(k(t))^{\frac{2}{1-p}}S_{p}^{\frac{2(p+1)}{p-1}}.
\end{align*}

(2) From assumption (A) and \eqref{d} we know that the conclusions in (2) are valid. The proof is complete.
\end{proof}

In this paper, we consider weak solutions to problem \eqref{p}, which is defined as follows.

\begin{definition}
\label{ddef1}(See \cite{Tan1})
A function $u$ is called a (weak) solution to problem \eqref{p} in $\Omega\times (0,T)$ if
$$
u\in L^\infty(0,T;H_0^1(\Omega)),\qquad \int_0^T\Big\|\dfrac{u_t(t)}{|x|}\Big\|_2^2\rm{d}t<\infty,
$$
and $u(x,t)$ satisfies $u(x,0)=u_0(x)$ and
\begin{equation}\label{weak}
\Big(\dfrac{u_t}{|x|^2},v\Big)+(\nabla u,\nabla v)=k(t)(u^p,v),  \quad\forall\ v\in H_0^1(\Omega),\ t\in(0,T).
\end{equation}
\end{definition}

Local existence of weak solutions to problem \eqref{p} can be obtained by using the standard regularization method.
Interested reader may refer to \cite{Tan1,Tan2} for a similar proof.
Moreover, it follows from the weak maximum principle that $u(x,t)$ is nonnegative since $u_0(x)\geq0$ in $\Omega$.
If no confusion arises, we simply write $u(t)$ to denote the weak solution $u(x,t)$ to problem \eqref{p}.
From now on, we shall denote by $T^*\in[0,+\infty)$ the maximal existence time of $u(t)$, which is defined as follows.

\begin{definition}\label{blow-up}
Let $u(t)$ be a weak solution to problem \eqref{p}. We say that $u(t)$ blows up at a finite time $T_0$ provided that
$u(t)$ exists for all $t\in[0,T_0)$ and
\begin{equation}\label{blow}
\lim\limits_{t\rightarrow T_0}\Big\|\dfrac{u(t)}{|x|}\Big\|_2^2=+\infty.
\end{equation}
In this case, we say that the maximal existence time of $u(t)$ is $T_0$.
If \eqref{blow} does not happen for any finite $T_0$,
then $u(t)$ is said to be a global solution and the maximal existence time of $u(t)$ is $+\infty$.
\end{definition}

Let the assumption (A) hold and assume that $u(t)$ is a weak solution to problem \eqref{p}.
Then the following energy identity follows from a quite standard argument.
\begin{lemma}\label{monotone}
(\cite{Sun}) Let the assumption (A) hold and $u(t)$ be a weak solution to problem \eqref{p}.
Then $J(u(t);t)$ is non-increasing in $t$ and it holds, for any $t\in(0,T^*)$, that
\begin{equation}\label{ei}
J(u(t);t)+\int_0^t\Big(\Big\|\dfrac{u_\tau(\tau)}{|x|}\Big\|_2^2+\dfrac{k'(\tau)}{p+1}\|u(\tau)\|^{p+1}_{p+1}\Big){\rm d}\tau=J(u_0;0).
\end{equation}
\end{lemma}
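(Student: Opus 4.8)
The plan is to differentiate the time-dependent energy $J(u(t);t)$ along the flow and to identify the dissipation. There are two sources of time dependence in $J$: the evolution of $u$ and the explicit dependence through $k(t)$. Carrying out the differentiation of \eqref{j} (and using $u\ge0$, so that $\frac{\mathrm d}{\mathrm dt}\|u\|_{p+1}^{p+1}=(p+1)(u^p,u_t)$) gives, formally,
\[
\frac{\mathrm d}{\mathrm dt}J(u(t);t)=(\nabla u,\nabla u_t)-k(t)(u^p,u_t)-\frac{k'(t)}{p+1}\|u\|_{p+1}^{p+1}.
\]

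First I would insert the test function $v=u_t$ into the weak formulation \eqref{weak}, which yields
\[
\Big\|\frac{u_t}{|x|}\Big\|_2^2+(\nabla u,\nabla u_t)=k(t)(u^p,u_t),
\]
so that the combination $(\nabla u,\nabla u_t)-k(t)(u^p,u_t)$ equals $-\|u_t/|x|\|_2^2$. Substituting this into the previous display collapses everything to
\[
\frac{\mathrm d}{\mathrm dt}J(u(t);t)=-\Big\|\frac{u_t}{|x|}\Big\|_2^2-\frac{k'(t)}{p+1}\|u\|_{p+1}^{p+1}.
\]
Since $k'(t)\ge0$ by assumption (A) and both terms on the right are nonnegative, this already shows $J(u(t);t)$ is non-increasing; integrating over $[0,t]$ and recalling $J(u(0);0)=J(u_0;0)$ produces exactly the identity \eqref{ei}.

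The main obstacle is making this rigorous, since the formal step treats $u_t$ as an admissible test function while Definition \ref{ddef1} only guarantees $u\in L^\infty(0,T;H_0^1(\Omega))$ together with $\int_0^T\|u_t/|x|\|_2^2\,\mathrm dt<\infty$; in particular $u_t$ need not belong to $H_0^1(\Omega)$, so the term $(\nabla u,\nabla u_t)$ is not a priori defined. I would handle this through the regularization (Galerkin) scheme that produces the local solution: for the smooth approximants $u_m$ all terms are legitimate and the energy identity holds exactly, after which one passes to the limit. Weak lower semicontinuity of $\|\nabla\cdot\|_2$ and of the dissipation integral delivers the energy inequality at once, while upgrading to the stated equality requires strong convergence of $\nabla u_m$ and of $u_m/|x|$ in $L^2$, the latter controlled by Hardy's inequality (Lemma \ref{Hardy}). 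A Steklov time-average of the equation, tested against the averaged $u_t$, provides an alternative that legitimizes $v=u_t$ directly before any limit is taken. Either route is the ``quite standard argument'' referred to via \cite{Sun}.
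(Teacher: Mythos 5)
Your proposal is correct and is exactly the argument the paper has in mind: the paper gives no proof of this lemma at all, deferring to \cite{Sun} with the remark that the identity ``follows from a quite standard argument,'' and that argument is precisely your computation --- testing \eqref{weak} with $v=u_t$ to identify $(\nabla u,\nabla u_t)-k(t)(u^p,u_t)=-\|u_t/|x|\|_2^2$, combining with the differentiation of \eqref{j}, and integrating in time. Your observation that $u_t$ is not a priori an admissible test function, together with the repair via the Galerkin/regularization scheme (or Steklov averaging) through which the solution is constructed, is the standard justification and matches the treatment referenced in \cite{Sun}.
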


Denote by $\mathcal{S}$ the set of weak solutions to the following elliptic problem
\begin{equation}\label{elliptic}
\begin{cases}
-\Delta w=k(0)|w|^{p-1}w,& x\in\Omega,\\
w(x)=0, &x\in\partial\Omega.
\end{cases}
\end{equation}
To show the finite time blow-up of solutions to problem \eqref{p} for subcritical initial energy,
we need some basic properties of $\mathcal{S}$ which are summarized into the following lemma.
Interested reader may refer to \cite{Sun} for a similar proof.

\begin{lemma}\label{s}
Assume that (A) holds and $u$ is a weak solution to problem \eqref{p} with initial datum $u_0\in H_0^1(\Omega)$. Then

{\rm (1)} $u_0\not\in\mathcal{S}$ provided that $\|\nabla u_{0}\|^{2}_{2}=\lambda_{1}\|u_{0}\|^{2}_{2}>0$,
where $\lambda_1>0$ is the first eigenvalue of \textcolor[rgb]{1.00,0.00,0.00}{$-\Delta$} in $\Omega$ under homogeneous Dirichlet boundary condition.

{\rm (2)} $\mathcal{S}\neq\emptyset$, $\mathcal{S}\subset \mathcal{N}(0)$ and $\mathcal{N}(0)\setminus\mathcal{S}\neq\emptyset$.

{\rm (3)} $\Big\|\dfrac{u_t(0)}{|x|}\Big\|_2^2>0$ provided that $u_{0}\not\in\mathcal{S}$.
\end{lemma}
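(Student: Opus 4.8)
The plan is to handle the three assertions in turn, using throughout the variational characterization that a function $w$ belongs to $\mathcal{S}$ precisely when
\begin{equation*}
(\nabla w,\nabla v)=k(0)(|w|^{p-1}w,v)\qquad\forall\,v\in H_0^1(\Omega),
\end{equation*}
i.e. when $w$ is a nonzero critical point of $J(\cdot\,;0)$. For part (1) I would first note that the hypothesis $\|\nabla u_0\|_2^2=\lambda_1\|u_0\|_2^2$ says the Rayleigh quotient of $u_0$ attains its infimum $\lambda_1$, so $u_0$ is a first eigenfunction: $-\Delta u_0=\lambda_1 u_0$. Arguing by contradiction, if in addition $u_0\in\mathcal{S}$ then also $-\Delta u_0=k(0)|u_0|^{p-1}u_0$; subtracting forces $k(0)|u_0|^{p-1}=\lambda_1$ wherever $u_0\neq0$. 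Since a first eigenfunction is of one sign and strictly positive in the interior of $\Omega$, this would make $|u_0|$ a positive constant, contradicting the fact that $u_0\in H_0^1(\Omega)$ is nontrivial yet vanishes on $\partial\Omega$. Hence $u_0\notin\mathcal{S}$.

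For part (2) the substantive point is $\mathcal{S}\neq\emptyset$: because $1<p<\frac{n+2}{n-2}$, the embedding $H_0^1(\Omega)\hookrightarrow L^{p+1}(\Omega)$ is compact, so $J(\cdot\,;0)$ satisfies the Palais--Smale condition and admits a nontrivial critical point (equivalently, a minimizer of $J(\cdot\,;0)$ over the nonempty manifold $\mathcal{N}(0)$), which is an element of $\mathcal{S}$; this is exactly the ground-state construction already underlying $d(0)$ in Lemma \ref{depth}. The inclusion $\mathcal{S}\subset\mathcal{N}(0)$ is immediate: testing the identity above with $v=w$ gives $\|\nabla w\|_2^2=k(0)\|w\|_{p+1}^{p+1}$, i.e. $I(w;0)=0$. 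Finally, for $\mathcal{N}(0)\setminus\mathcal{S}\neq\emptyset$ I would project a first eigenfunction $\phi_1$ onto the Nehari manifold: there is a unique $\lambda_*>0$ with $\lambda_*\phi_1\in\mathcal{N}(0)$, and since scaling preserves the Rayleigh quotient, $\lambda_*\phi_1$ still satisfies $\|\nabla(\lambda_*\phi_1)\|_2^2=\lambda_1\|\lambda_*\phi_1\|_2^2$, so by part (1) it lies outside $\mathcal{S}$.

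For part (3) I would argue by contradiction from the weak formulation \eqref{weak}. If $\|u_t(0)/|x|\|_2^2=0$, then $u_t(0)=0$ a.e. in $\Omega$ (as $|x|^{-1}>0$ off a null set), and evaluating \eqref{weak} at $t=0$ leaves
\begin{equation*}
(\nabla u_0,\nabla v)=k(0)(u_0^p,v)\qquad\forall\,v\in H_0^1(\Omega).
\end{equation*}
Since $u_0\geq0$ gives $u_0^p=|u_0|^{p-1}u_0$, this is precisely the weak form of \eqref{elliptic}, so $u_0\in\mathcal{S}$, contradicting the hypothesis; as the quantity is nonnegative it must be strictly positive. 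The main obstacle is the existence claim $\mathcal{S}\neq\emptyset$ in part (2): one must supply enough compactness (here furnished by the subcritical range of $p$) to produce a genuine nontrivial solution of \eqref{elliptic}, whereas parts (1) and (3) and the remaining assertions of (2) reduce to algebraic manipulation of the weak formulations together with the eigenvalue characterization.
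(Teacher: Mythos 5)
Your proposal is correct. One caveat on comparison: the paper does not actually prove Lemma \ref{s} at all --- it only refers the reader to \cite{Sun} for ``a similar proof'' --- so your argument fills a gap rather than paralleling a written proof; what you do is the standard route and almost certainly the intended one. Specifically: for (1), eigenfunction rigidity (a first eigenfunction of $-\Delta$ that also solved \eqref{elliptic} would have to equal a nonzero constant, which is impossible for a function in $H_0^1(\Omega)$ on a bounded domain); for (2), ground-state existence in the subcritical range $1<p<\frac{n+2}{n-2}$, testing \eqref{elliptic} with $w$ itself to get $I(w;0)=0$, and the Nehari rescaling $\lambda_*\phi_1$ of the first eigenfunction combined with (1); for (3), the observation that $u_t(0)=0$ a.e.\ would turn \eqref{weak} into the weak form of \eqref{elliptic}, contradicting $u_0\notin\mathcal{S}$. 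Two small points you should make explicit if this were written out in full: first, $\mathcal{S}$ must be read as the set of \emph{nontrivial} weak solutions of \eqref{elliptic}, since otherwise $0\in\mathcal{S}$ and the inclusion $\mathcal{S}\subset\mathcal{N}(0)$ would fail ($0\notin\mathcal{N}(0)$ by definition); second, the identity \eqref{weak} is stated for $t\in(0,T)$, so evaluating it at $t=0$ in part (3) requires continuity of $t\mapsto u_t(t)/|x|$ in $L^2(\Omega)$ up to $t=0$ (or a limiting argument as $t\to0^+$) --- a regularity that is implicit in the lemma's own assertion that $\big\|u_t(0)/|x|\big\|_2^2$ is a well-defined quantity.
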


We shall end up this section with the next two lemmas.
The first one is a special form of Gagliardo-Nirenberg's inequality (see \cite{Han2})
and the second one is the starting point when applying concavity argument \cite{Levine1973}.

\begin{lemma}\label{G-N}
Let $1<p<\frac{n+2}{n-2}$. Then for any $u\in H_0^{1}(\Omega)$ we have
\begin{equation}\label{2.3}
\|u\|^{p+1}_{p+1}\leq G\|\nabla u\|^{\alpha(p+1)}_2\|u\|^{(1-\alpha)(p+1)}_2,
\end{equation}
where $\alpha=\frac{n(p-1)}{2(p+1)}\in(0,1)$
and $G>0$ is a constant depending only on $\Omega$, $n$ and $p$.
\end{lemma}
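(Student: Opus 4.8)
The plan is to derive \eqref{2.3} as an interpolation between the $L^2$ norm and the critical Lebesgue norm, combined with the Sobolev embedding. Write $2^*=\frac{2n}{n-2}$ for the critical Sobolev exponent. First I would check that the subcritical hypothesis $1<p<\frac{n+2}{n-2}$ places $p+1$ strictly between $2$ and $2^*$: indeed $p>1$ gives $p+1>2$, while $p<\frac{n+2}{n-2}$ gives $p+1<\frac{2n}{n-2}=2^*$. This is precisely what makes the interpolation nondegenerate and guarantees $\alpha\in(0,1)$.

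The key step is the logarithmic convexity of the $L^r$ norms (a direct consequence of H\"older's inequality): since $2\le p+1\le 2^*$, there is a unique $\theta\in[0,1]$ with $\frac{1}{p+1}=\frac{1-\theta}{2}+\frac{\theta}{2^*}$, and for this $\theta$ one has $\|u\|_{p+1}\le\|u\|_2^{1-\theta}\|u\|_{2^*}^{\theta}$. I would then solve the defining relation for $\theta$: substituting $2^*=\frac{2n}{n-2}$ yields $\frac{1}{p+1}=\frac12-\frac{\theta}{n}$, hence $\theta=\frac{n(p-1)}{2(p+1)}$, which is exactly the exponent $\alpha$ in the statement. This verification that the interpolation exponent coincides with the prescribed $\alpha$ is the heart of the matter.

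Finally I would invoke the Sobolev embedding $H_0^1(\Omega)\hookrightarrow L^{2^*}(\Omega)$, valid for $n\ge3$, which supplies a constant $C_S=C_S(n,\Omega)$ with $\|u\|_{2^*}\le C_S\|\nabla u\|_2$ for all $u\in H_0^1(\Omega)$. Inserting this into the interpolation bound produces $\|u\|_{p+1}\le C_S^{\alpha}\|\nabla u\|_2^{\alpha}\|u\|_2^{1-\alpha}$, and raising both sides to the power $p+1$ gives \eqref{2.3} with $G=C_S^{\alpha(p+1)}$, a constant depending only on $n$, $p$ and $\Omega$.

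The only place demanding care is the exponent bookkeeping in the interpolation identity; once $\theta=\alpha$ is confirmed, the remaining estimates are routine, so I do not anticipate any substantive obstacle. Alternatively, since this is a special case of the classical Gagliardo--Nirenberg inequality, one could simply cite that inequality and verify that the prescribed $\alpha$ satisfies its scaling relation $\frac{1}{p+1}=\alpha\left(\frac12-\frac1n\right)+(1-\alpha)\frac12$, which again reduces to the same computation.
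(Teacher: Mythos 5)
Your proof is correct, but it differs from the paper in an important structural sense: the paper does not prove this lemma at all, it simply states it as ``a special form of Gagliardo--Nirenberg's inequality'' and cites the reference \cite{Han2}. Your argument supplies what the citation hides: the strict inclusion $2<p+1<2^*=\frac{2n}{n-2}$ coming from the subcritical hypothesis, the H\"older (log-convexity) interpolation $\|u\|_{p+1}\le\|u\|_2^{1-\theta}\|u\|_{2^*}^{\theta}$ with $\frac{1}{p+1}=\frac{1-\theta}{2}+\frac{\theta}{2^*}$, the computation $\theta=\frac{n(p-1)}{2(p+1)}=\alpha$ (which also shows $\alpha\in(0,1)$ is equivalent to $1<p<\frac{n+2}{n-2}$), and the Sobolev embedding $H_0^1(\Omega)\hookrightarrow L^{2^*}(\Omega)$ giving $G=C_S^{\alpha(p+1)}$. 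All steps check out: the exponent bookkeeping is right, and the resulting constant depends only on $n$, $p$ and $\Omega$ as required. What the paper's route buys is brevity; what yours buys is a self-contained verification of the exponent $\alpha$ and an explicit form of the constant $G$, which is in fact used quantitatively later (it enters $C_2$ and $C^*$ in the lower-bound estimate for the blow-up time in Theorem \ref{lower}), so making it explicit is a genuine, if modest, added value.
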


\begin{lemma}\label{concave}(See \cite{Han2,Levine1973})
Suppose that a positive, twice-differentiable function $\psi(t)$ satisfies the inequality
$$\psi''(t)\psi(t)-(1+\theta)(\psi'(t))^2\geq0,\qquad\forall\ t\geq t_0\geq0,$$
where $\theta>0$. If $\psi(t_0)>0$, $\psi'(t_0)>0$, then $\psi(t)\rightarrow\infty$ as $t\rightarrow t_*\leq t^*=t_0+\dfrac{\psi(t_0)}{\theta\psi'(t_0)}$.
\end{lemma}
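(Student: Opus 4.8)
The plan is to use the classical ``concavity'' device of Levine: instead of working with $\psi$ directly, I would study the auxiliary function $\phi(t):=\psi(t)^{-\theta}$, which is well defined and positive wherever $\psi$ is positive and finite. The whole point of the hypothesis is that the differential inequality is exactly the condition making $\phi$ concave. Concretely, I would first differentiate twice:
$$\phi'(t)=-\theta\,\psi(t)^{-\theta-1}\psi'(t),\qquad \phi''(t)=-\theta\,\psi(t)^{-\theta-2}\big[\psi(t)\psi''(t)-(1+\theta)(\psi'(t))^2\big].$$
Since $\psi>0$, $\theta>0$, and the bracketed quantity is nonnegative by assumption, this yields $\phi''(t)\le0$ for all $t\ge t_0$, i.e. $\phi$ is concave on $[t_0,\infty)$.

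Next I would record the signs of the data at $t_0$: from $\psi(t_0)>0$ and $\psi'(t_0)>0$ one gets $\phi(t_0)=\psi(t_0)^{-\theta}>0$ and $\phi'(t_0)=-\theta\,\psi(t_0)^{-\theta-1}\psi'(t_0)<0$. Because $\phi''\le0$, the derivative $\phi'$ is non-increasing, so $\phi'(t)\le\phi'(t_0)<0$ for every $t\ge t_0$; in particular $\phi$ is strictly decreasing. Concavity also forces $\phi$ to lie below its tangent line at $t_0$, giving
$$0<\phi(t)\le\phi(t_0)+\phi'(t_0)(t-t_0),\qquad t\ge t_0.$$
The right-hand side is an affine function of negative slope that vanishes precisely at
$$t^*=t_0-\frac{\phi(t_0)}{\phi'(t_0)}=t_0+\frac{\psi(t_0)}{\theta\,\psi'(t_0)},$$
after cancelling the powers of $\psi(t_0)$; this already identifies the advertised bound.

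Finally I would translate this back into a statement about $\psi$. The affine upper bound forces $\phi$ to vanish no later than $t^*$: if $\psi$ remained positive and finite on all of $[t_0,t^*)$, then the squeeze estimate $0<\phi(t)\le\phi(t_0)+\phi'(t_0)(t-t_0)\to0$ would give $\phi(t)\to0$, equivalently $\psi(t)=\phi(t)^{-1/\theta}\to+\infty$, as $t\to t^{*-}$. Hence there is a first time $t_*\le t^*$ at which $\phi(t_*)=0$, at which $\psi$ must leave the region where it is positive and finite; since $\psi$ is continuous and stays positive, the only possibility is $\psi(t)\to+\infty$ as $t\to t_*^-$, which is the claim with $t_*\le t^*$. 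The one step requiring a little care — and the only genuine obstacle — is this last passage: one must exclude $\phi$ converging to a strictly positive limit (ruled out by the affine upper bound) and confirm that $\phi\to0$ really corresponds to blow-up of $\psi$ rather than some other breakdown, which is routine given the positivity and continuity of $\psi$ once the concavity estimate is in place.
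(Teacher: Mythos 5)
Your proof is correct and is precisely the classical concavity argument of Levine: the paper itself states this lemma without proof, deferring to \cite{Han2,Levine1973}, and the argument given there is exactly your computation showing $\phi=\psi^{-\theta}$ is concave with $\phi(t_0)>0$, $\phi'(t_0)<0$, so the tangent-line bound forces $\phi\to0$ (equivalently $\psi\to\infty$) no later than $t^*=t_0+\psi(t_0)/(\theta\psi'(t_0))$. No gaps; your closing care about identifying $\phi\to0$ with blow-up of $\psi$ is exactly the right point to flag.
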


\section{Upper bound for blow-up time}
\setcounter{equation}{0}

With the preliminaries given in Section 2 at hand, we can now state and prove the main results in this paper.
For simplicity, we shall write $L(t)=\dfrac{1}{2}\Big\|\dfrac{u(t)}{|x|}\Big\|_2^2$ in the sequel.
We first prove a finite time blow-up result for problem \eqref{p} with negative initial energy.

\begin{theorem}\label{th1}
Let (A) hold and $u(t)$ be a weak solution to problem \eqref{p}. If $J(u_0;0)<0$, then
$u(t)$ blows up in finite time. Moreover, $T^*\leq \dfrac{2L(0)}{(1-p^2)J(u_0;0)}=\dfrac{\Big\|\dfrac{u_0}{|x|}\Big\|_2^2}{(1-p^2)J(u_0;0)}$.
\end{theorem}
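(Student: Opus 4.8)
The plan is to run a concavity (Levine-type) argument directly on the weighted squared norm, taking $\psi(t)=L(t)=\frac12\|u(t)/|x|\|_2^2$ and extracting the exact bound from Lemma~\ref{concave} with $\theta=\frac{p-1}{2}$. First I would differentiate $L$ and insert $v=u$ into the weak formulation \eqref{weak} to get $L'(t)=k(t)\|u\|_{p+1}^{p+1}-\|\nabla u\|_2^2$, which, using the definition \eqref{j} of $J$, I would rewrite as $L'(t)=\frac{p-1}{2}\|\nabla u\|_2^2-(p+1)J(u(t);t)$. The energy identity \eqref{ei} of Lemma~\ref{monotone} then gives $J(u(t);t)\le J(u_0;0)<0$ for all $t$, whence $L'(t)\ge-(p+1)J(u(t);t)\ge-(p+1)J(u_0;0)>0$; in particular $L$ is strictly increasing and $L'(0)\ge-(p+1)J(u_0;0)$.

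Next I would compute the second derivative. Differentiating $L'(t)=k(t)\|u\|_{p+1}^{p+1}-\|\nabla u\|_2^2$ and eliminating $k(t)(u^p,u_t)$ by testing \eqref{weak} with $v=u_t$ yields $L''(t)=(p+1)\|u_t/|x|\|_2^2+k'(t)\|u\|_{p+1}^{p+1}+\frac{p-1}{2}\frac{d}{dt}\|\nabla u\|_2^2$. On the other hand, the Cauchy--Schwarz inequality in the weighted inner product $\int_\Omega \frac{fg}{|x|^2}\,dx$ gives $(L'(t))^2=\big(\int_\Omega \frac{uu_t}{|x|^2}dx\big)^2\le 2L(t)\,\|u_t/|x|\|_2^2$. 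Combining these reduces the target differential inequality $L(t)L''(t)\ge\frac{p+1}{2}(L'(t))^2$ to the pointwise estimate $L''(t)\ge(p+1)\|u_t/|x|\|_2^2$, i.e. to $\frac{p-1}{2}\frac{d}{dt}\|\nabla u\|_2^2+k'(t)\|u\|_{p+1}^{p+1}\ge0$.

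Granting that inequality, Lemma~\ref{concave} applies with $\psi=L$, $\theta=\frac{p-1}{2}$ and $t_0=0$, since $\psi(0)=L(0)>0$ (as $u_0$ is nontrivial) and $\psi'(0)=L'(0)>0$. It produces a finite $t_*\le t^*=\frac{L(0)}{\theta L'(0)}$ with $L(t)\to\infty$ as $t\to t_*$, which is exactly the blow-up of $\|u/|x|\|_2^2$ required by Definition~\ref{blow-up}. Substituting $\theta=\frac{p-1}{2}$ and the lower bound $L'(0)\ge-(p+1)J(u_0;0)$ turns $t^*$ into $\frac{L(0)}{\frac{p-1}{2}(-(p+1)J(u_0;0))}=\frac{2L(0)}{(1-p^2)J(u_0;0)}$, which is the claimed upper bound (and equals $\|u_0/|x|\|_2^2/((1-p^2)J(u_0;0))$).

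The step I expect to be the crux is the pointwise bound $\frac{p-1}{2}\frac{d}{dt}\|\nabla u\|_2^2+k'(t)\|u\|_{p+1}^{p+1}\ge0$, equivalently $L''\ge(p+1)\|u_t/|x|\|_2^2$: since $k'\ge0$ it suffices that $\|\nabla u(t)\|_2^2$ be non-decreasing, and this monotonicity is where the negative-energy hypothesis and the dissipation structure of \eqref{ei} must be exploited. A secondary technical point is that $v=u_t$ is only a formal test function under Definition~\ref{ddef1}, so the differentiations should be justified on a regularized/Galerkin approximation (as in the local existence argument) and then passed to the limit. If the direct monotonicity proves awkward, I would fall back on Levine's device of applying Lemma~\ref{concave} to $\int_0^t\|u/|x|\|_2^2\,ds+(T_0-t)\|u_0/|x|\|_2^2+\beta(t+\tau_0)^2$, whose integral structure replaces $\frac{d}{dt}\|\nabla u\|_2^2$ by the controllable dissipation $\int_0^t\|u_s/|x|\|_2^2\,ds\le\int_0^t(\|u_s/|x|\|_2^2+\frac{k'}{p+1}\|u\|_{p+1}^{p+1})ds$ coming from \eqref{ei}, at the cost of a coarser constant in the time bound.
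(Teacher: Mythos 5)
There is a genuine gap, and it sits exactly at the step you yourself flag as the crux. Your main argument requires the pointwise inequality $L''(t)\ge(p+1)\big\|\tfrac{u_t}{|x|}\big\|_2^2$, equivalently $\tfrac{p-1}{2}\tfrac{d}{dt}\|\nabla u(t)\|_2^2+k'(t)\|u(t)\|_{p+1}^{p+1}\ge0$, which you propose to obtain from monotonicity of $\|\nabla u(t)\|_2^2$. Nothing in the hypotheses delivers this: testing \eqref{weak} with $v=u_t$ gives $\tfrac12\tfrac{d}{dt}\|\nabla u\|_2^2=-\big\|\tfrac{u_t}{|x|}\big\|_2^2+k(t)(u^p,u_t)$, whose right-hand side has no sign, and the negative-energy hypothesis together with \eqref{ei} controls only the combination $\tfrac{d}{dt}J(u(t);t)\le0$, not $\tfrac{d}{dt}\|\nabla u\|_2^2$ separately. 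This is precisely the structural obstruction that distinguishes parabolic from hyperbolic problems: for a parabolic flow the gradient norm can decrease (smoothing) even while the solution heads toward blow-up, which is why Levine's concavity device is classically applied to the time-integrated functional $\int_0^t\|\cdot\|^2$ rather than to $L(t)$ itself. So the direct concavity argument on $L$ is not salvageable as written, and the reduction via Cauchy--Schwarz, although correct, reduces the theorem to an unproven (and in general unprovable) claim.

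Your fallback is sound --- it is essentially the paper's proof of Theorem \ref{th2}, and with $J(u_0;0)<0$ one can take $t_0=0$ and $\beta\in\big(0,-\tfrac{p+1}{p}J(u_0;0)\big]$ --- but, as you anticipate, it only yields $T^*\le \tfrac{8pL(0)}{(p+1)(p-1)^2(-J(u_0;0))}$, which exceeds the bound claimed in Theorem \ref{th1}, namely $\tfrac{2L(0)}{(p+1)(p-1)(-J(u_0;0))}$, by the factor $\tfrac{4p}{p-1}>4$; hence the stated estimate remains unproven. The paper avoids $L''$ altogether by Philippin's first-order technique: set $K(t)=-J(u(t);t)$, so that \eqref{ei} and (A) give $K'(t)\ge\big\|\tfrac{u_t}{|x|}\big\|_2^2$ and $K(t)\ge K(0)>0$; your own identity $L'=-I(u;t)\ge(p+1)K$ together with Cauchy--Schwarz then gives $L(t)K'(t)\ge\tfrac12(L'(t))^2\ge\tfrac{p+1}{2}L'(t)K(t)$, so $K L^{-(p+1)/2}$ is nondecreasing, i.e. $K(t)\ge\kappa L^{(p+1)/2}(t)$ with $\kappa=K(0)L^{-(p+1)/2}(0)$. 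Integrating the resulting first-order inequality $L'\ge(p+1)\kappa L^{(p+1)/2}$ forces $L^{(1-p)/2}(t)$ to reach zero by time $\tfrac{2L(0)}{(1-p^2)J(u_0;0)}$, which is exactly the claimed bound. If you want to repair your write-up with minimal change, keep your first-derivative computation and the regularization remark, and replace the entire second-derivative step by this $K$-device.
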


\begin{proof}
We shall apply the first order differential inequality technique from Philippin \cite{Philipin}
to show the finite time blow-up result for problem \eqref{p} with negative initial energy.
For this, set $K(t)=-J(u(t);t)$. Then $L(0)>0$, $K(0)>0$.
From (A) and \eqref{ei} it follows that
$$K'(t)=-\dfrac{d}{dt}J(u(t);t)=\Big\|\dfrac{u_t(t)}{|x|}\Big\|_2^2+\dfrac{k'(t)}{p+1}\|u(t)\|^{p+1}_{p+1}\geq0,$$
which implies $K(t)\geq K(0)>0$ for all $t\in[0,T^*)$.
Recalling \eqref{j}, \eqref{i} and \eqref{weak}, we obtain, for any  $t\in[0,T^*)$, that
\begin{equation}\label{3.1}
\begin{split}
L'(t)=&\Big(\dfrac{u_t(t)}{|x|^2},u(t)\Big)=-I(u(t);t)=\dfrac{p-1}{2}\|\nabla u(t)\|_2^2-(p+1)J(u(t);t)\\
\geq&(p+1)K(t).
\end{split}
\end{equation}
Recalling \eqref{ei} and making use of Cauchy-Schwarz inequality, we arrive at
\begin{equation}\label{3.2}
L(t)K'(t)\geq\dfrac{1}{2}\Big\|\dfrac{u(t)}{|x|}\Big\|_2^2\Big\|\dfrac{u_t(t)}{|x|}\Big\|_2^2
\geq\dfrac{1}{2}\Big(\dfrac{u(t)}{|x|^2},u_t(t)\Big)^2=\dfrac{1}{2}(L'(t))^2\geq\dfrac{p+1}{2}L'(t)K(t),
\end{equation}
which then implies
\begin{equation*}
\Big(K(t)L^{-\frac{p+1}{2}}(t)\Big)'=L^{-\frac{p+3}{2}}(t)\Big(K'(t)L(t)-\dfrac{p+1}{2}K(t)L'(t)\Big)\geq0.
\end{equation*}
Therefore,
\begin{equation}\label{3.3}
0<\kappa:= K(0)L^{-\frac{p+1}{2}}(0)\leq K(t)L^{-\frac{p+1}{2}}(t)
\leq\dfrac{1}{p+1}L'(t)L^{-\frac{p+1}{2}}(t)=\dfrac{2}{1-p^2}\Big(L^{\frac{1-p}{2}}(t)\Big)'.
\end{equation}
Integrating \eqref{3.3} over $[0,t]$ for any $t\in(0,T^*)$ and noticing that $p>1$, one has
\begin{equation*}
\kappa t\leq \dfrac{2}{1-p^2}\Big(L^{\frac{1-p}{2}}(t)-L^{\frac{1-p}{2}}(0)\Big),
\end{equation*}
or equivalently
\begin{equation}\label{3.4}
0\leq L^{\frac{1-p}{2}}(t) \leq L^{\frac{1-p}{2}}(0)-\dfrac{p^2-1}{2}\kappa t,\qquad t\in(0,T^*).
\end{equation}
It is obvious that \eqref{3.4} can not hold for all $t>0$. Therefore, $T^*<+\infty$.
Moreover, it can be inferred from \eqref{3.4} that
$$T^*\leq \dfrac{2}{(p^2-1)\kappa}L^{\frac{1-p}{2}}(0)=\dfrac{2L(0)}{(1-p^2)J(u_0;0)}.$$
The proof is complete.
\end{proof}

\begin{remark}\label{rem1}
According to Theorem \ref{th1}, if the weak solution $u(t)$ to problem \eqref{p} exists globally,
then $J(u(t);t)\geq0$ for all $t\in[0,+\infty)$.
\end{remark}

For the case of $J(u_0;0)\geq0$, we obtain a blow-up results when the initial energy is ``subcritical" and when the initial Nehari functional
is negative. More precisely, we have the following theorem.

\begin{theorem}\label{th2}
Assume that (A) holds and that $u_0\in H_0^1(\Omega)$ satisfies
\begin{equation}\label{3.5}
J(u_0;0)\leq d(\infty)\qquad and \qquad\ I(u_0;0)<0.
\end{equation}
Then the weak solution $u(t)$ to problem \eqref{p} blows up in finite time.
Furthermore, $T^*$ can be estimated from above as follows
\begin{equation*}
T^*\leq t_0+\dfrac{8pL(t_0)}{(p+1)(p-1)^2[d(\infty)-J(u(t_0);t_0)]},
\end{equation*}
where $t_0\geq0$ is any finite time such that $J(u(t_0);t_0)<d(\infty)$.
In particular, if $J(u_0;0)<d(\infty)$, then
\begin{equation}\label{upper2}
T^*\leq \dfrac{4p\Big\|\dfrac{u_0}{|x|}\Big\|_2^2}{(p+1)(p-1)^2[d(\infty)-J(u_0;0)]}.
\end{equation}
\end{theorem}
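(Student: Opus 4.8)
The plan is to bypass a direct appeal to the concavity Lemma~\ref{concave} and instead reduce everything to a single first-order differential inequality for $L(t)$, fed by an invariance property of the time-dependent unstable set. Write $G(t):=d(\infty)-J(u(t);t)$ and note $G'(t)=K'(t)$ where $K=-J$, so by Lemma~\ref{monotone} we have $G'(t)=\big\|\tfrac{u_t(t)}{|x|}\big\|_2^2+\tfrac{k'(t)}{p+1}\|u(t)\|_{p+1}^{p+1}\ge0$, and $G(t)\ge G(0)\ge0$ since $J(u(t);t)$ is non-increasing and $J(u_0;0)\le d(\infty)$. The two ingredients I would establish are: \textbf{(a)} $I(u(t);t)<0$ persists on $[0,T^*)$, which upgrades the trivial bound in \eqref{3.1} to $L'(t)=-I(u(t);t)\ge(p+1)G(t)$; and \textbf{(b)} the Cauchy--Schwarz inequality $2L(t)G'(t)\ge(L'(t))^2$, which is exactly \eqref{3.2} read with $K'=G'$.

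For ingredient (a) (the ``moving well'' step) I would first observe that $I(u_0;0)<0$ together with the definition \eqref{sp} of $S_p$ forces $\|\nabla u_0\|_2^2>\tfrac{2(p+1)}{p-1}d(0)$, and then argue by contradiction that $\|\nabla u(t)\|_2^2>\tfrac{2(p+1)}{p-1}d(t)\ge\tfrac{2(p+1)}{p-1}d(\infty)$ (equivalently $I(u(t);t)<0$) cannot fail. If $t_1$ were the first time with $I(u(t_1);t_1)=0$, then $L'=-I>0$ on $[0,t_1)$ gives $L(t_1)\ge L(0)>0$, hence $u(t_1)\neq0$ and $u(t_1)\in\mathcal N(t_1)$; the definition of $d$ yields $J(u(t_1);t_1)\ge d(t_1)\ge d(\infty)$, while monotonicity yields $J(u(t_1);t_1)\le J(u_0;0)\le d(\infty)$. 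Equality throughout gives $J(u(t_1);t_1)=J(u_0;0)$, so \eqref{ei} forces $u_t\equiv0$ on $[0,t_1]$, i.e. $u\equiv u_0$; but then $I(u(t_1);t_1)=I(u_0;t_1)\le I(u_0;0)<0$, because $I(u_0;\cdot)$ is non-increasing ($k$ non-decreasing, $\|u_0\|_{p+1}^{p+1}>0$), contradicting $I(u(t_1);t_1)=0$. Feeding $\|\nabla u\|_2^2>\tfrac{2(p+1)}{p-1}d(\infty)$ into $L'=\tfrac{p-1}{2}\|\nabla u\|_2^2-(p+1)J$ from \eqref{3.1} then delivers $L'(t)\ge(p+1)G(t)$.

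Granting (a) and (b), the endgame is purely first-order. From (b) and (a), $G'L\ge\tfrac12(L')^2\ge\tfrac{p+1}{2}GL'$, so $\big(G/L^{(p+1)/2}\big)'\ge0$; hence for $t\ge t_0$ (any time with $J(u(t_0);t_0)<d(\infty)$, so that $G(t_0)>0$ and $L(t_0)>0$) we get $G(t)\ge\mu\,L^{(p+1)/2}(t)$ with $\mu:=G(t_0)L^{-(p+1)/2}(t_0)>0$. Substituting into (a) yields the superlinear inequality $L'(t)\ge(p+1)\mu\,L^{(p+1)/2}(t)$, i.e. $\big(L^{(1-p)/2}\big)'\le-\tfrac{p^2-1}{2}\mu$; integrating over $[t_0,t]$ and using $L^{(1-p)/2}\ge0$ forces $T^*<\infty$ with
\begin{equation*}
T^*-t_0\le\frac{2L^{(1-p)/2}(t_0)}{(p^2-1)\mu}=\frac{2L(t_0)}{(p^2-1)\,[d(\infty)-J(u(t_0);t_0)]},
\end{equation*}
which is sharper than, and in particular implies, the stated bound (as $\tfrac{2}{p-1}\le\tfrac{8p}{(p-1)^2}$); the precise constant in the theorem is the one the concavity Lemma~\ref{concave} would return. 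The case $t_0=0$ covers $J(u_0;0)<d(\infty)$, and when $J(u_0;0)=d(\infty)$ an admissible $t_0$ exists because $J(u(t);t)$ cannot remain equal to $d(\infty)$ on an interval (otherwise $u\equiv u_0$ is stationary, forcing $u_0\in\mathcal S\subset\mathcal N(0)$ and $I(u_0;0)=0$).

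I expect the genuine obstacle to be ingredient (a): since $d(t)$ and $\mathcal N(t)$ drift with $t$, one must track $d(t)\ge d(\infty)\ge J(u(t);t)$ simultaneously, and the borderline case $J(u_0;0)=d(\infty)$ has to be excluded by the rigidity ``$u_t\equiv0\Rightarrow u_0\in\mathcal S$''. A secondary technical point is the continuity in $t$ of $t\mapsto\|\nabla u(t)\|_2^2$ needed to define the first crossing time $t_1$; for weak solutions this is a priori only lower semicontinuity, and I would dispose of it exactly as in \cite{Sun,Tan1}.
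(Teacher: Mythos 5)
Your proposal is correct, but it proves the theorem by a genuinely different route than the paper, in both of its main steps. For the invariance of the unstable set (the paper's Step I), the paper splits into the cases $J(u_0;0)<d(\infty)$ and $J(u_0;0)=d(\infty)$, handling the critical case by invoking Lemma \ref{s} to get $\big\|\tfrac{u_t(0)}{|x|}\big\|_2^2>0$ and thereby a strict energy drop before running the contradiction; you instead run a single rigidity argument valid in both cases: at a first crossing time $t_1$ the chain $d(\infty)\le d(t_1)\le J(u(t_1);t_1)\le J(u_0;0)\le d(\infty)$ collapses to equalities, the energy identity \eqref{ei} then forces $u\equiv u_0$ on $[0,t_1]$, and $k'\ge 0$ gives $I(u_0;t_1)\le I(u_0;0)<0$, contradicting $I(u(t_1);t_1)=0$ (you also correctly supply $u(t_1)\neq 0$ via $L(t_1)\ge L(0)>0$, a point the paper leaves implicit, and you use a similar rigidity step, rather than Lemma \ref{s}(3), to produce an admissible $t_0$ when $J(u_0;0)=d(\infty)$). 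For the blow-up step, the paper uses Levine's concavity method (Lemma \ref{concave}) with the two-parameter auxiliary function $F(t)$ in \eqref{F} and an optimization over $\beta$ and $\sigma$; you instead extend the Philippin-type first-order technique that the paper uses only for Theorem \ref{th1}, replacing $K=-J$ there by $G=d(\infty)-J$, combining \eqref{3.1}, \eqref{3.2} and Step-II-type coercivity to get $\big(GL^{-(p+1)/2}\big)'\ge 0$ and then a closed superlinear ODE for $L$. This buys a sharper bound, $T^*\le t_0+\tfrac{2L(t_0)}{(p^2-1)[d(\infty)-J(u(t_0);t_0)]}$, which implies the stated ones since $2(p-1)\le 8p$; what it gives up is the flexibility of the concavity framework, which does not require the Cauchy--Schwarz structure \eqref{3.2} to be fed by a sign on $G$ at a single time and is the more standard tool in this literature. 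One shared caveat: both your argument and the paper's presuppose continuity of $t\mapsto I(u(t);t)$ to define the first crossing time; you flag this explicitly and defer to \cite{Sun,Tan1}, which is the same level of rigor as the paper itself.
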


\begin{proof}
We will divide the proof into three steps.

{\bf Step I.} Set $$V(t)=\{v\in H_0^1(\Omega):J(v;t)<d(t),I(v;t)<0\}\qquad t\in[0,T^*).$$
We claim that there exists a $t_0\in[0,T^*)$ such that $u(t)\in V(t)$ for all $t\in[t_0,T^*)$ provided that
$J(u_0;0)\leq d(\infty)$ and $I(u_0;0)<0$.

In fact, for the case of $J(u_0;0)<d(\infty)$, take $t_0=0$,
then it follows from \eqref{d} and \eqref{ei} that
\begin{equation}\label{3.7}
J(u(t);t)\leq J(u_{0};0)<d(\infty)\leq d(t),\qquad\ t\in[0,T^*).
\end{equation}
It remains to show that $I(u(t);t)<0$ for all $t\in[0,T^*)$. Since $I(u_0;0)<0$, by continuity, there exists a suitably small $t_1>0$ such that
$I(u(t);t)<0$ for all $t\in[0,t_1)$. Suppose on the contrary that
there exists a $t_2>t_1$ such that $I(u(t_2);t_2)=0$ and $I(u(t);t)<0$ for all $t\in[0,t_2)$.
Then by the definition of $d(t)$, one obtains
$$
J(u(t_{2});t_{2})\geq\underset{v\in \mathcal{N}(t_{2})}{\inf}J(v;t_{2})=d(t_{2}),
$$
which contradicts \eqref{3.7}. Therefore, $I(u(t);t)<0$ for all $t\in[0,T^*)$.

When $J(u_0;0)=d(\infty)$, by continuity and $I(u_0;0)<0$ we see that there exists a $t_3>0$ such that $I(u(t);t)<0$ for all $t\in[0,t_3)$.
In addition, Lemma \ref{s} says that $u_0$ is not a weak solution to problem \eqref{elliptic} and $\Big\|\dfrac{u_t(0)}{|x|}\Big\|_2^2>0$.
By continuity again, there exists a $t_0\in(0,t_3)$ such that $I(u(t_0);t_0)<0$ and $\Big\|\dfrac{u_t(t)}{|x|}\Big\|_2^2>0$ for all $t\in[0,t_0)$.
Therefore, by recalling \eqref{ei} and Lemma \ref{depth}, one obtains
\begin{equation}\label{3.8}
J(u(t);t)\leq J(u(t_{0});t_{0})\leq J(u_{0};0)-\int_{0}^{t_{0}}\Big\|\dfrac{u_{\tau}(\tau)}{|x|}\Big\|_2^2{\rm d}\tau <J(u_{0};0)=d(\infty)\leq d(t).
\end{equation}
By applying the argument similar to the case of $J(u_0;0)<d(\infty)$, we can show that $I(u(t);t)<0$ for all $t\in[t_0,T^*)$,
and therefore $u(t)\in V(t)$ for all $t\in[t_0,T^*)$. Moreover, since $L'(t)=-I(u(t);t)$, $L(t)$ is strictly increasing on $[t_0,T^*)$.

{\bf Step II.} We show that
\begin{equation}\label{3.9}
\|\nabla u(t)\|_2^2\geq\dfrac{2(p+1)d(t)}{p-1},\qquad\ t\in[t_0,T^*).
\end{equation}

From Step I we know that $I(u(t);t)<0$ for all $t\in[t_0,T^*)$. Therefore,
$$\|\nabla u(t)\|_2^2<k(t)\|u(t)\|^{p+1}_{p+1}\leq\dfrac{k(t)}{S_p^{p+1}}\|\nabla u(t)\|_2^{p+1},\qquad  t\in[t_0,T^*),$$
which implies \eqref{3.9}, by the definition of $d(t)$.

{\bf Step III.} We show that $T^*<+\infty$. For any $T\in(t_0,T^*)$,
define the positive function
\begin{equation}\label{F}
F(t)=\int_{t_0}^tL(\tau)\mathrm{d}\tau+(T-t)L(t_0)+\dfrac{\beta}{2}(t+\sigma)^2,\qquad t\in[t_0,T],
\end{equation}
where $\beta>0$ and $\sigma>-t_0$.

By direct computations
\begin{equation}\label{3.11}
\begin{split}
F'(t)=&L(t)-L(t_0)+\beta(t+\sigma)=\int_{t_0}^t\frac{d}{d\tau}L(\tau)\mathrm{d}\tau+\beta(t+\sigma)\\
=&\int_{t_0}^t\Big(u(\tau),\dfrac{u_\tau(\tau)}{|x|^2}\Big)\mathrm{d}\tau+\beta(t+\sigma),
\end{split}
\end{equation}
\begin{equation}\label{3.12}
\begin{split}
F''(t)=&L'(t)+\beta=\Big(u(t),\dfrac{u_t(t)}{|x|^2}\Big)+\beta=-I(u(t);t)+\beta\\
=&\dfrac{p-1}{2}\|\nabla u(t)\|_2^2-(p+1)J(u(t);t)+\beta\\
=&\dfrac{p-1}{2}\|\nabla u(t)\|_2^2-(p+1)\Big[J(u(t_0);t_0)-\int_{t_0}^t\Big(\Big\|\dfrac{u_\tau(\tau)}{|x|}\Big\|_2^2+\dfrac{k'(\tau)}{p+1}\|u(\tau)\|^{p+1}_{p+1}\Big){\rm d}\tau\Big]+\beta\\
\geq&\dfrac{p-1}{2}\|\nabla u(t)\|_2^2-(p+1)J(u(t_0);t_0)+(p+1)\int_{t_0}^t\Big\|\dfrac{u_\tau(\tau)}{|x|}\Big\|_2^2{\rm d}\tau+\beta.
\end{split}
\end{equation}
Applying Cauchy-Schwarz inequality and H\"{o}lder's inequality to yield
\begin{equation*}
\begin{split}
f(t):=&\Big[\int_{t_0}^t\Big\|\dfrac{u(\tau)}{|x|}\Big\|^2_2\mathrm{d}\tau
+\beta(t+\sigma)^2\Big]\Big[\int_{t_0}^t\Big\|\dfrac{u_\tau(\tau)}{|x|}\Big\|^2_2\mathrm{d}\tau+\beta\Big]
-\Big[\int_{t_0}^t\Big(u,\dfrac{u_\tau}{|x|^2}\Big)\mathrm{d}\tau+\beta(t+\sigma)\Big]^2\\
=&\Big[\int_{t_0}^t\Big\|\dfrac{u(\tau)}{|x|}\Big\|^2_2\mathrm{d}\tau\int_{t_0}^t\Big\|\dfrac{u_\tau(\tau)}{|x|}\Big\|^2_2\mathrm{d}\tau
-\Big(\int_{t_0}^t\Big(u,\dfrac{u_\tau}{|x|^2}\Big)\mathrm{d}\tau\Big)^2\Big]\\
&+\beta\Big[(t+\sigma)^2\int_{t_0}^t\Big\|\dfrac{u_\tau(\tau)}{|x|}\Big\|^2_2\mathrm{d}\tau+\int_{t_0}^t\Big\|\dfrac{u(\tau)}{|x|}\Big\|^2_2\mathrm{d}\tau
-2(t+\sigma)\int_{t_0}^t\Big(u,\dfrac{u_\tau}{|x|^2}\Big)\mathrm{d}\tau\Big]\\
\geq&0.
\end{split}
\end{equation*}
Therefore, by recalling \eqref{3.11}, \eqref{3.12} and noticing the nonnegativity of $f(t)$, we arrive at
\begin{equation}\label{3.13}
\begin{split}
&F(t)F''(t)-\frac{p+1}{2}(F'(t))^2\\
=&F(t)F''(t)+\frac{p+1}{2}\Big[f(t)-\Large[2F(t)-2(T-t)L(t_0)\Large]\big(\int_{t_0}^t\Big\|\dfrac{u_\tau}{|x|}\Big\|^2_2\mathrm{d}\tau+\beta\big)\Big]\\
\geq&F(t)F''(t)-(p+1)F(t)\Big(\int_{t_0}^t\Big\|\dfrac{u_\tau}{|x|}\Big\|^2_2\mathrm{d}\tau+\beta\Big)\\
\geq&F(t)\Big[\dfrac{p-1}{2}\|\nabla u(t)\|_2^2-(p+1)J(u(t_0);t_0)+(p+1)\int_{t_0}^t\Big\|\dfrac{u_\tau}{|x|}\Big\|^2_2\mathrm{d}\tau+\beta\\
&-(p+1)\int_{t_0}^t\Big\|\dfrac{u_\tau}{|x|}\Big\|^2_2\mathrm{d}\tau-(p+1)\beta\Big]\\
=&F(t)\Big[\dfrac{p-1}{2}\|\nabla u(t)\|_2^2-(p+1)J(u(t_0);t_0)-p\beta\Big].
\end{split}
\end{equation}

In view of \eqref{3.8}, \eqref{3.9} and \eqref{3.13}, we get,
for any $t\in[t_0,T]$ and $\beta\in\Big(0,\dfrac{(p+1)(d(\infty)-J(u(t_0);t_0))}{p}\Big]$ that
\begin{equation*}
F(t)F''(t)-\frac{p+1}{2}(F'(t))^2\geq0,\qquad t\in[t_0,T].
\end{equation*}
Therefore, Lemma \ref{concave} guarantees that
$$0<T-t_0\leq\dfrac{2F(t_0)}{(p-1)F'(t_0)}=\dfrac{2L(t_0)}{(p-1)\beta(t_0+\sigma)}(T-t_0)+\dfrac{t_0+\sigma}{p-1},$$
or
\begin{equation}\label{3.14}
(T-t_0)\Big(1-\dfrac{2L(t_0)}{(p-1)\beta(t_0+\sigma)}\Big)\leq \dfrac{t_0+\sigma}{p-1}.
\end{equation}

Fix a $\beta_0\in\Big(0,\dfrac{(p+1)(d(\infty)-J(u(t_0);t_0))}{p}\Big]$.
Then for any $\sigma\in\Big(\dfrac{2L(t_0)}{(p-1)\beta_0}-t_0,+\infty\Big)$, we have $0<\dfrac{2L(t_0)}{(p-1)\beta_0(t_0+\sigma)}<1$,
which, together with \eqref{3.14}, implies that
\begin{equation}\label{3.15}
T\leq t_0+\dfrac{t_0+\sigma}{p-1}\Big(1-\dfrac{2L(t_0)}{(p-1)\beta_0(t_0+\sigma)}\Big)^{-1}=t_0+\dfrac{\beta_0(t_0+\sigma)^2}{(p-1)\beta_0(t_0+\sigma)-2L(t_0)}.
\end{equation}
Minimizing the right hand side in \eqref{3.15} for $\sigma\in\Big(\dfrac{2L(t_0)}{(p-1)\beta_0}-t_0,+\infty\Big)$ to yield
\begin{equation}\label{3.16}
T\leq \inf\limits_{\sigma\in(\frac{2L(t_0)}{(p-1)\beta_0}-t_0,+\infty)}\Big[t_0+\dfrac{\beta_0(t_0+\sigma)^2}{(p-1)\beta_0(t_0+\sigma)-2L(t_0)}\Big]=
t_0+\dfrac{8L(t_0)}{(p-1)^2\beta_0}.
\end{equation}
Minimizing the right hand side of \eqref{3.16} with respect to $\beta_0\in\Big(0,\dfrac{(p+1)(d(\infty)-J(u(t_0);t_0))}{p}\Big]$
one obtains
\begin{equation*}
T\leq t_0+\dfrac{8pL(t_0)}{(p+1)(p-1)^2[d(\infty)-J(u(t_0);t_0)]}.
\end{equation*}
By the arbitrariness of $T<T^*$ we finally get
$$T^*\leq t_0+\dfrac{8pL(t_0)}{(p+1)(p-1)^2[d(\infty)-J(u(t_0);t_0)]}.$$
In particular, if $J(u_0;0)<d(\infty)$, then, by taking $t_0=0$, we have
$$T^*\leq \dfrac{4p\Big\|\dfrac{u_0}{|x|}\Big\|_2^2}{(p+1)(p-1)^2[d(\infty)-J(u_0;0)]}.$$
The proof is complete.
\end{proof}

\begin{remark}\label{rem2}
It is easily seen from \eqref{j} and \eqref{i} that $J(u_0;0)<0$ implies $I(u_0;0)<0$.
Therefore, Theorem \ref{th1} can be viewed as a special case of Theorem \ref{th2}.
But we obtained the upper bounds for $T^*$ by using different techniques.
By comparing the two upper bounds directly one can see that the one in Theorem \ref{th1} is more accurate when $0\leq d(\infty)\leq\frac{3p+1}{p-1}(-J(u_0;0))$,
while the one in Theorem \ref{th2} is more accurate when $0<\frac{3p+1}{p-1}(-J(u_0;0))\leq d(\infty)$.
\end{remark}

At the end of this section, we give another blow-up condition for problem \eqref{p},
which ensures that problem \eqref{p} admits blow-up solutions at arbitrarily high initial energy level.
The result in this direction is the following theorem.

\begin{theorem}\label{th3}
Assume that (A) holds and that $u(t)$ is a weak solution to problem \eqref{p}.
If
\begin{equation}\label{3.17}
0<J(u_0;0)<\dfrac{L(0)}{C_1},
\end{equation}
then $u(t)$ blows up at some finite time $T^*$.
Moreover, the upper bound for $T^*$ has the following form
$$T^*\leq \dfrac{8pH_nL(0)}{(p-1)^3[L(0)-C_1J(u_0;0)]},$$
where $C_1=\frac{(p+1)H_n}{p-1}$ and $H_n$ is the positive constant given in Hardy inequality.
\end{theorem}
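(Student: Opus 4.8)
The plan is to run a concavity argument of the same flavour as the one used for Theorem \ref{th2}, but with Hardy's inequality \eqref{HI} playing the role that the well depth $d(t)$ played there. The starting point is again the identity $L'(t)=-I(u(t);t)=\frac{p-1}{2}\|\nabla u(t)\|_2^2-(p+1)J(u(t);t)$ obtained as in \eqref{3.1}, together with the energy inequality \eqref{ei}, which under (A) gives $J(u(t);t)\le J(u_0;0)$ for all $t$. Since we are now dealing with possibly high initial energy, the crucial new ingredient is that Lemma \ref{Hardy} yields $L(t)=\frac12\|\frac{u(t)}{|x|}\|_2^2\le\frac{H_n}{2}\|\nabla u(t)\|_2^2$, i.e. $\|\nabla u(t)\|_2^2\ge 2L(t)/H_n$; this $t$-dependent lower bound on the Dirichlet energy is what replaces the constant bound \eqref{3.9} of the previous proof.

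First I would record two preliminary facts. Combining the displayed lower bound with $J(u(t);t)\le J(u_0;0)$ gives
$$I(u(t);t)=(p+1)J(u(t);t)-\frac{p-1}{2}\|\nabla u(t)\|_2^2\le (p+1)J(u_0;0)-\frac{(p-1)}{H_n}L(t).$$
Recalling $C_1=(p+1)H_n/(p-1)$, the right-hand side equals $\frac{p+1}{C_1}\big(C_1J(u_0;0)-L(t)\big)$. At $t=0$ this is negative precisely because of hypothesis \eqref{3.17}, so $I(u_0;0)<0$ and hence $L'(0)>0$. Moreover, as long as $L(t)\ge L(0)$ one has $I(u(t);t)\le\frac{p+1}{C_1}\big(C_1J(u_0;0)-L(0)\big)=:-\delta<0$, so $L'(t)\ge\delta>0$. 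A short comparison/continuity argument starting from $L'(0)>0$ then shows $L(t)\ge L(0)$, indeed $L(t)\ge L(0)+\delta t$, for all $t\in[0,T^*)$. I expect this monotonicity step to be the main obstacle, since everything downstream relies on being able to bound $\|\nabla u(t)\|_2^2$ from below by the fixed quantity $2L(0)/H_n$.

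With $L(t)\ge L(0)$ in hand, I would reuse verbatim the auxiliary functional $F(t)$ of \eqref{F} with $t_0=0$, compute $F'$ and $F''$ exactly as in \eqref{3.11}--\eqref{3.12}, and invoke the same Cauchy--Schwarz/H\"older estimate ($f(t)\ge0$) to reach
$$F(t)F''(t)-\frac{p+1}{2}(F'(t))^2\ge F(t)\Big[\frac{p-1}{2}\|\nabla u(t)\|_2^2-(p+1)J(u_0;0)-p\beta\Big].$$
At this point, instead of \eqref{3.9}, I would insert $\frac{p-1}{2}\|\nabla u(t)\|_2^2\ge\frac{(p-1)}{H_n}L(t)\ge\frac{(p-1)}{H_n}L(0)$, so that the bracket is nonnegative for every $\beta\in\big(0,\beta_*\big]$ with $\beta_*=\frac{(p-1)[L(0)-C_1J(u_0;0)]}{pH_n}$, which is positive by \eqref{3.17}. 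Lemma \ref{concave} (with $1+\theta=\frac{p+1}{2}$) then forces $F$, and hence $L$, to blow up in finite time. Finally, optimizing over the free parameters $\sigma$ and $\beta$ exactly as in \eqref{3.14}--\eqref{3.16} yields $T^*\le \frac{8L(0)}{(p-1)^2\beta_*}=\frac{8pH_nL(0)}{(p-1)^3[L(0)-C_1J(u_0;0)]}$, which is the claimed bound.
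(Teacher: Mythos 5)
Your proposal is correct and follows essentially the same route as the paper's proof: Hardy's inequality to bound $\|\nabla u(t)\|_2^2$ from below by $2L(t)/H_n$, monotonicity of $L$, and then the concavity argument with the identical auxiliary functional $F_1$, the same admissible range of $\beta$, and the same optimization over $\sigma$ and $\beta$, yielding the stated bound. The only (minor) difference lies in how $L(t)\ge L(0)$ is obtained: you run a continuity/bootstrap argument on the set where $L\ge L(0)$, using $J(u(t);t)\le J(u_0;0)$, whereas the paper introduces $M(t)=L(t)-C_1J(u(t);t)$, observes $M'(t)\ge \frac{p-1}{H_n}M(t)$ (since $J(u(t);t)$ is non-increasing), and applies Gronwall's inequality to get $M(t)\ge M(0)e^{\frac{p-1}{H_n}t}>0$, hence $L'(t)>0$; both arguments are valid and deliver the same conclusion.
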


\begin{proof}
This theorem will be proved by using some ideas from \cite{Han2,Sun} and an application of Hardy inequality.

First, by using \eqref{3.12} and Hardy inequality \eqref{HI} we have
\begin{equation}\label{3.18}
\begin{split}
\frac{d}{dt}L(t)=&\dfrac{p-1}{2}\|\nabla u(t)\|_2^2-(p+1)J(u(t);t)\\
\geq&\dfrac{p-1}{2H_n}\Big\|\frac{u(t)}{|x|}\Big\|_2^2-(p+1)J(u(t);t)\\
=&\dfrac{p-1}{H_n}\Big[L(t)-C_1J(u(t);t)\Big].
\end{split}
\end{equation}
Set
$$M(t)=L(t)-C_1J(u(t);t),\qquad t\in[0,T^*),$$
then $M(0)=L(0)-C_1J(u_0;0)>0$ by \eqref{3.17}.
Moreover, from \eqref{ei} and \eqref{3.18} it follows
\begin{equation}\label{3.19}
\frac{d}{dt}M(t)=\frac{d}{dt}L(t)-C_1\frac{d}{dt}J(u(t);t)\geq \frac{d}{dt}L(t)\geq\dfrac{p-1}{H_n}M(t).
\end{equation}
Therefore, an application of Gronwall's inequality implies that
\begin{equation}\label{3.20}
M(t)\geq M(0)e^{\frac{p-1}{H_n}t}>0,
\end{equation}
which, together with \eqref{3.18}, shows that $L(t)$ is strictly increasing on $[0,T^*)$.

For any $T\in(0,T^*)$, $\beta>0$ and $\sigma>0$,  define
$$F_1(t)=\int_{0}^tL(\tau)\mathrm{d}\tau+(T-t)L(0)+\dfrac{\beta}{2}(t+\sigma)^2,\qquad t\in[0,T].$$
Similarly to the derivation of \eqref{3.13} we get
\begin{equation}\label{3.21}
F_1(t)F_1''(t)-\frac{p+1}{2}(F_1'(t))^2\geq F_1(t)\Big[\dfrac{p-1}{2}\|\nabla u(t)\|_2^2-(p+1)J(u_0;0)-p\beta\Big].
\end{equation}
Applying Hardy inequality \eqref{HI} again and noticing the monotonicity of $L(t)$, we further obtain
\begin{equation}\label{3.22}
\begin{split}
F_1(t)F_1''(t)-\frac{p+1}{2}(F_1'(t))^2\geq& F_1(t)\Big[\dfrac{p-1}{2H_n}\Big\|\frac{u(t)}{|x|}\Big\|_2^2-(p+1)J(u_0;0)-p\beta\Big]\\
\geq&F_1(t)\Big[\dfrac{p-1}{2H_n}\Big\|\frac{u_0}{|x|}\Big\|_2^2-(p+1)J(u_0;0)-p\beta\Big]\\
=&\dfrac{p-1}{H_n}F_1(t)\Big[M(0)-\dfrac{pH_n}{p-1}\beta\Big]\geq0,
\end{split}
\end{equation}
for all $\beta\in(0,\frac{(p-1)M(0)}{pH_n}]$.

Starting with \eqref{3.22}, recalling Lemma \ref{concave} and applying similar arguments to that in the proof of Theorem \ref{th2}
we get
$$T^*\leq \dfrac{8pH_nL(0)}{(p-1)^3M(0)}=\dfrac{8pH_nL(0)}{(p-1)^3[L(0)-C_1J(u_0;0)]}.$$
The proof is complete.
\end{proof}

\begin{remark}\label{rem3}
Theorem \ref{th3} implies that for any $R>0$, there exists a $u_0$ such that $J(u_0;0)=R<L(0)/C_1$,
while the corresponding solution $u(x,t)$ to problem \eqref{p} with $u_0$ as initial datum blows up in finite time.
We refer the interested reader to \cite{Han2,Sun} for the standard proof of this statement.
\end{remark}

\section{Lower bound for blow-up time}
\setcounter{equation}{0}

In this section, we shall derive a lower bound for the blow-up time $T^*$,
by combining the famous Gagliardo-Nirenberg's inequality with the first order differential inequalities.

\begin{theorem}\label{lower}
Assume that (A) holds and $1<p<1+\dfrac{4}{n}$.  Let $u(t)$ be a weak solution to problem \eqref{p} that blows up at $T^*$.
Then $T^*\geq \dfrac{L^{1-\gamma}(0)}{C^*(\gamma-1)}$,
where $\gamma>1$ and $C^*>0$ are two constants that will be determined in the proof.
\end{theorem}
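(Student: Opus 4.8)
The plan is to reduce the theorem to a single first-order differential inequality of the form $L'(t)\le C^*L^\gamma(t)$ with $\gamma>1$, and then integrate it. Indeed, once such an inequality holds on $[0,T^*)$, dividing by $L^\gamma$ gives $\frac{d}{dt}L^{1-\gamma}(t)\ge -(\gamma-1)C^*$, so that $L^{1-\gamma}(t)\ge L^{1-\gamma}(0)-(\gamma-1)C^*t$. Since $u(t)$ blows up at $T^*$, Definition \ref{blow-up} forces $L(t)\to\infty$, hence $L^{1-\gamma}(t)\to0$, as $t\to T^*$; letting $t\to T^*$ in the integrated inequality yields $0\ge L^{1-\gamma}(0)-(\gamma-1)C^*T^*$, which is exactly the claimed bound $T^*\ge L^{1-\gamma}(0)/[C^*(\gamma-1)]$. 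So the whole task is to produce the inequality $L'(t)\le C^*L^\gamma(t)$ with an explicit $\gamma>1$.

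To obtain it, I would start from the identity already used in \eqref{3.1}, namely $L'(t)=-I(u(t);t)=k(t)\|u(t)\|_{p+1}^{p+1}-\|\nabla u(t)\|_2^2$, and estimate the reaction term by the Gagliardo--Nirenberg inequality of Lemma \ref{G-N}: $\|u\|_{p+1}^{p+1}\le G\|\nabla u\|_2^{\alpha(p+1)}\|u\|_2^{(1-\alpha)(p+1)}$. This gives $L'(t)\le k(t)G\|\nabla u\|_2^{\alpha(p+1)}\|u\|_2^{(1-\alpha)(p+1)}-\|\nabla u\|_2^2$. Here the hypothesis $p<1+\frac4n$ is decisive, since it is exactly equivalent to $\alpha(p+1)=\frac{n(p-1)}{2}<2$. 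This lets me apply Young's inequality to the product $\|\nabla u\|_2^{\alpha(p+1)}\cdot\big(k(t)G\|u\|_2^{(1-\alpha)(p+1)}\big)$ with conjugate exponents $\frac{2}{\alpha(p+1)}$ and $\frac{2}{2-\alpha(p+1)}$, choosing the weight so that the emerging $\|\nabla u\|_2^2$ term has coefficient exactly $1$ and is therefore absorbed by the $-\|\nabla u\|_2^2$ already present. The gradient norm cancels entirely, leaving $L'(t)\le C\,(k(t)G)^{\frac{2}{2-\alpha(p+1)}}\|u\|_2^{\frac{2(1-\alpha)(p+1)}{2-\alpha(p+1)}}$.

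It then remains to convert $\|u\|_2$ into $L(t)$. Because $\Omega$ is bounded, $R:=\sup_{x\in\Omega}|x|<\infty$, so $|x|^{-2}\ge R^{-2}$ on $\Omega$ and hence $\|u\|_2^2\le R^2\|u/|x|\|_2^2=2R^2L(t)$. Substituting this and setting $\gamma=\frac{(1-\alpha)(p+1)}{2-\alpha(p+1)}=\frac{(p+1)-\alpha(p+1)}{2-\alpha(p+1)}$, one checks that $\gamma>1$ is equivalent to $p+1>2$, i.e.\ to $p>1$, so $\gamma>1$ holds automatically. Absorbing all constants into one $C^*>0$ produces $L'(t)\le C^*L^\gamma(t)$, as required, and completes the argument.

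The main obstacle is the simultaneous presence of $\|\nabla u\|_2$ (which is \emph{not} controlled from above, since it diverges at blow-up) and the need to express every quantity through $L(t)$; the resolution is precisely the Young-inequality absorption made possible by the subcritical restriction $p<1+\frac4n$, without which the gradient term could not be eliminated. A secondary technical point is the weight $k(t)$: as it is only assumed nondecreasing in (A), I would bound it on the finite interval by its terminal value, $k(t)\le k(T^*)$ for $t\in[0,T^*)$ (or work on $[0,T]$ with $T<T^*$ and let $T\to T^*$), so that the constant $C^*$ entering the inequality is genuinely finite; this is the only place where the monotonicity of $k$ is used in the argument.
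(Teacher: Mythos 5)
Your proposal is correct and follows essentially the same route as the paper's own proof: the identity $L'(t)=-I(u(t);t)=k(t)\|u(t)\|_{p+1}^{p+1}-\|\nabla u(t)\|_2^2$, the Gagliardo--Nirenberg estimate of Lemma \ref{G-N} with $k(t)$ bounded via its monotonicity, Young's inequality with exponents $\frac{2}{\alpha(p+1)}$ and $\frac{2}{2-\alpha(p+1)}$ tuned so the gradient term is absorbed, the conversion $\|u\|_2^2\le C\,L(t)$ from boundedness of $\Omega$, and integration of $L'\le C^*L^\gamma$ up to the blow-up time. The only cosmetic differences are that you use $\sup_{x\in\Omega}|x|$ in place of $\mathrm{diam}(\Omega)$ and you verify explicitly that $\gamma>1$ reduces to $p>1$, which the paper leaves implicit.
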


\begin{proof}
Combining \eqref{3.12} with Gagliardo-Nirenberg's inequality and recalling the monotonicity of $k(t)$, we have
\begin{equation}\label{4.1}
\begin{split}
L'(t)=&-I(u(t);t)=k(t)\|u(t)\|^{p+1}_{p+1}-\|\nabla u(t)\|_2^2\\
\leq&k_1G\|\nabla u(t)\|^{\alpha(p+1)}_2\|u(t)\|^{(1-\alpha)(p+1)}_2-\|\nabla u(t)\|_2^2,
\end{split}
\end{equation}
where $k_1$ is an arbitrary upper bound for $k(T^*)$, $G$ and $\alpha$ are the positive constants given in Lemma \ref{G-N}.
Since $1<p<1+\dfrac{4}{n}$, it is directly verified that
$$0<\alpha(p+1)=\dfrac{n(p-1)}{2}<2.$$
Applying Young's inequality to the first term on the right hand side of \eqref{4.1}, we obtain, for any $\varepsilon>0$, that
\begin{equation}\label{4.2}
\|\nabla u(t)\|^{\alpha(p+1)}_2\|u(t)\|^{(1-\alpha)(p+1)}_2\leq\dfrac{\alpha(p+1)}{2}\varepsilon\|\nabla u(t)\|^{2}_2+
\dfrac{2-\alpha(p+1)}{2}\varepsilon^{-\frac{\alpha(p+1)}{2-\alpha(p+1)}}\|u(t)\|^{2\gamma}_2,
\end{equation}
where $\gamma=\dfrac{(1-\alpha)(p+1)}{2-\alpha(p+1)}>1$.
Taking $\varepsilon=\dfrac{2}{k_1G\alpha(p+1)}$ and substituting \eqref{4.2} into \eqref{4.1} to yield
\begin{equation}\label{4.3}
L'(t)\leq C_2\|u(t)\|_2^{2\gamma}\leq2^\gamma (diam(\Omega))^{2\gamma}C_2L^\gamma(t):=C^*L^\gamma(t),
\end{equation}
where $C_2$ is a positive constant depending on $n$, $p$, $k_1$ and $G$, $diam(\Omega)>0$ is the diameter of $\Omega$
and $C^*=2^\gamma (diam(\Omega))^{2\gamma}C_2$. Integrating \eqref{4.3} over $[0,t)$, we get
\begin{equation*}
\dfrac{1}{1-\gamma}\Big\{L^{1-\gamma}(t)-L^{1-\gamma}(0)\Big\}\leq C^*t.
\end{equation*}
Since $\gamma>1$, letting $t\to T^*$ in the above inequality and recalling that $\lim\limits_{t\to T^*}L(t)=+\infty$,
we obtain
$$T^*\geq\dfrac{L^{1-\gamma}(0)}{C^*(\gamma-1)}.$$
The proof is complete.
\end{proof}

\begin{remark}
In \cite{Sun}, the authors investigated the blow-up properties of solutions to a class of semilinear parabolic or pseudo-parabolic equations,
and obtained, among many other interesting results, the lower bounds for the blow-up time only for the pseudo-parabolic case.
In our paper, by applying the famous Gagliardo-Nirenberg's inequality, we derived a lower bound for the blow-up time for the parabolic problem \eqref{p}.
Moreover, our treatment can also be applied to the parabolic problem considered in \cite{Sun} to obtain the lower bound for the blow-up time.
\end{remark}

\section*{Acknowledgements}
The author would like to express his sincere gratitude to Professor Wenjie Gao in Jilin University for his enthusiastic
guidance and constant encouragement. He would also like to thank the referees for their valuable
comments and suggestions, especially for pointing out the mistake of the blow-up time in Theorem \ref{th3} in the original manuscript.

{\small

\end{document}